\newcommand \R {\mathbb{R}}
\newcommand \Oh {\mathcal{O}}
\DeclareMathOperator \Tr {Tr}
\DeclareMathOperator \spec {spec}
\newtheorem{lem}{Lemma}
\newtheorem*{thm}{Theorem}
\theoremstyle{definition}
\newtheorem{rem}[lem]{Remark}
\numberwithin{equation}{section}
\numberwithin{lem}{section}
\numberwithin{Defn}{section}
\title
{A Fulling-Kuchment theorem for the $1D$ harmonic oscillator}
\author[Victor Guillemin]
{Victor Guillemin}
\address{Department of Mathematics, Massachusetts Institute of Technology, Cambridge, MA
02139-4397, U.S.A.}
\email{vwg@math.mit.edu}
\author[Hamid Hezari]
{Hamid Hezari}
\address{Department of Mathematics, Massachusetts Institute of Technology, Cambridge, MA
02139-4397, U.S.A.}
\email{hezari@math.mit.edu}
\keywords{Inverse spectral problems,  Semiclassical Schr\"odinger operators, Trace invariants, Hadamard's variational formula, Harmonic oscillator, Penrose mushroom, Sturm-Liouville theory}
\thanks{The first author is supported in part by NSF grant DMS-1005696 and the second author is partially supported by NSF
grant DMS-0969745.}
\date{August 31, 2011}
\begin{document}
\begin{abstract}
We prove that there exists a pair of non-isospectral $1D$ semiclassical Schr\"odinger operators whose spectra agree up to $\Oh(h^\infty)$. In particular, all their semiclassical trace invariants are the same. Our proof is based on an idea of Fulling-Kuchment  and Hadamard's variational formula applied to suitable perturbations of the harmonic oscillator.
\end{abstract}
\maketitle

\section{Introduction}
 This paper concerns the $1D$ semiclassical Schr\"odinger operator  
\[
P_{V} =- h^2\frac{d^2}{dx^2} + V(x), \qquad h>0,
\] where the potential $V$ (which is always assumed to be independent of $h$) satisfies:
\[
V \in C^\infty(\R;\R), \qquad \lim_{|x| \to \infty} V(x) = \infty.
\] 
For any $h>0$, the spectrum of $P_{V}$ on $\R$ is discrete and simple, and we write it as 
\[
\spec(P_{V}) = (\lambda_j)_{j=1}^\infty, \qquad  \lambda_1<\lambda_2<\lambda_3< \cdots \rightarrow \infty.
\]
Each $\lambda_j$ depends on $h$, but we do not include this in the notation. We denote by $u_j$ the corresponding eigenfunctions (which also depend on $h$), so that
\[
P_{V} u_j = \lambda_j u_j, \qquad u_j \in L^2(\mathbb R).
\]
Our main result, which was conjectured by Colin de Verdi\`ere in \cite{c}, is:
\begin{thm}\label{Theorem} There exists a pair of potentials $V^{\pm}(x) \in C^\infty (\mathbb R)$ with $V^{\pm}(x) \geq 0$ such that the operators
\begin{equation} P_{V^\pm}=-h^2 \frac{d^2}{dx^2} + V^{\pm}(x) \end{equation}
satisfy``$\spec(P_{V^+})=\spec(P_{V-})$ up to $\Oh (h^\infty)$" (so in particular they have the same semiclassical trace invariants) and such that the ground state eigenvalues $\lambda_1^+$ and $\lambda^-_1$ are different for all $h>0$ except possibly for a sequence $h_k \to 0$.
\end{thm}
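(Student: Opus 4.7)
The plan is to perturb the harmonic oscillator $V_0(x) = x^2$, whose eigenvalues are $(2j-1)h$ and whose Hermite eigenfunctions decay like $e^{-x^2/(2h)}$, by two distinct smooth non-negative compactly supported potentials localized in the classically forbidden region $\{|x|>a\}$ for some $a>0$. In the spirit of the Fulling-Kuchment ``invisible perturbation'' idea, such bumps will shift each eigenvalue only by $\Oh(h^\infty)$, while the operators themselves can be arranged to be genuinely non-isospectral.

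Concretely, I would choose $W^+, W^- \in C_c^\infty(\R)$ with $W^\pm \ge 0$, $\supp W^\pm \subset \{|x| > a\}$, $W^+ \ne W^-$, and such that $V^\pm(x) = x^2 + W^\pm(x)$ are not related by the reflection $x \mapsto -x$ (so that $P_{V^+}$ and $P_{V^-}$ are not unitarily equivalent). Along the interpolation $V_t^\pm = x^2 + t W^\pm$, $t \in [0,1]$, Hadamard's variational formula gives
$$\frac{d}{dt}\lambda_j^\pm(t) = \int W^\pm(x)\, |u_j^\pm(x;t)|^2\, dx.$$
Since $V_t^\pm \ge x^2$, the key step will be an Agmon estimate, uniform in $t \in [0,1]$, showing $|u_j^\pm(x;t)| = \Oh(e^{-c/h})$ on $\supp W^\pm$ for some $c > 0$. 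This forces $\frac{d}{dt}\lambda_j^\pm(t) = \Oh(h^\infty)$, and integrating from $t=0$ to $t=1$ yields $\lambda_j^\pm(h) = (2j-1)h + \Oh(h^\infty)$. In particular $\spec(P_{V^+}) = \spec(P_{V^-})$ up to $\Oh(h^\infty)$, and all semiclassical trace invariants coincide.

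For the separation of the ground states, my argument would use that $\lambda_1^\pm(h)$ are real-analytic in $h$ on $(0,\infty)$ (the operator depends analytically on $h^2$ and $\lambda_1$ is a simple isolated eigenvalue, so Kato's analytic perturbation theory applies). A leading-order Hadamard computation at $t=0$ gives the asymptotic
$$\lambda_1^\pm(h) - h \sim (\pi h)^{-1/2} \int W^\pm(x)\, e^{-x^2/h}\, dx,$$
so for a generic asymmetric choice of $W^\pm$ the difference $\lambda_1^+(h) - \lambda_1^-(h)$ is a non-identically-zero real-analytic function on $(0,\infty)$, and hence has a discrete zero set. A rescaling $x = \sqrt{h}y$ at large $h$ shows $\lambda_j^\pm/h \to 2j-1$ with a controlled correction, so zeros cannot accumulate at infinity, leaving at most a sequence $h_k \to 0$ as in the theorem.

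The main obstacle I anticipate is producing the uniform Agmon decay across the one-parameter family $V_t^\pm$ in a form strong enough to give $\Oh(h^\infty)$ (rather than merely $\Oh(e^{-c/h})$) in Hadamard's formula, with uniformity in both $t$ and the eigenvalue index $j$ on the relevant semiclassical window. A secondary and more conceptual point will be ensuring the ground-state difference $\lambda_1^+ - \lambda_1^-$ is not identically zero in $h$; this reduces to a deliberate, non-symmetric choice of $W^\pm$, after which analyticity propagates the non-vanishing to all but a discrete set of $h > 0$.
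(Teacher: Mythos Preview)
Your construction has a genuine gap at the very first step: the ``forbidden-region bump'' idea does \emph{not} give $\spec(P_{V^+})=\spec(P_{V^-})$ up to $\Oh(h^\infty)$ in the sense required by the theorem. The definition asks that for \emph{every} energy window $E>0$ one has $\sup_{\lambda_j^\pm<E}|\lambda_j^+-\lambda_j^-|\le C_{E,N}h^N$. But the region $\{|x|>a\}$ is classically forbidden only for energies below $a^2$; once $E>a^2$ the supports of $W^\pm$ lie inside the allowed region for the relevant eigenfunctions, Agmon decay gives you nothing, and Hadamard's formula produces shifts of order one in the perturbation, not $\Oh(h^\infty)$. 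Concretely, already the leading trace invariant $a_0(f)=\iint f(\xi^2+V(x))\,dx\,d\xi$ will differ for generic unrelated $W^+\ne W^-$ once $f$ is supported above $a^2$, so your potentials will not even have the same semiclassical trace invariants. This is also a misreading of Fulling--Kuchment: their mechanism is not an ``invisible bump in the forbidden region'' but a bump that is \emph{moved by a symmetry} of the background, so that all length/trace invariants are preserved exactly at every energy.

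The paper's proof uses exactly this symmetry idea: one takes $V^\pm(x)=x^2+t\alpha(x)+\epsilon\beta(\pm x)$ with $\alpha,\beta$ compactly supported on disjoint intervals, so that $V^-$ is obtained from $V^+$ by reflecting the single bump $\beta$ across the axis of the even background $x^2$. Because the supports are disjoint and $x^2$ is even, every trace invariant $a_k(f)$ is literally the same for $V^+$ and $V^-$, and then the Bohr--Sommerfeld/QBNF machinery forces the spectra to agree mod $\Oh(h^\infty)$ at \emph{all} energies. The separation of the ground states is then done at a \emph{fixed} $h$ (namely $h=1$): the presence of the asymmetric bump $t\alpha$ is shown, via a Sturm--Liouville/Weber-function comparison, to make the ground state $|u_1|^2$ of $-\tfrac{d^2}{dx^2}+x^2+t\alpha$ non-even on the support of $\beta$, so Hadamard in $\epsilon$ gives $\partial_\epsilon\lambda_1^+(0)\ne\partial_\epsilon\lambda_1^-(0)$ at $h=1$; Kato--Rellich analyticity in $h$ then propagates $\lambda_1^+\ne\lambda_1^-$ to all $h>0$ except possibly a sequence $h_k\to 0$. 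Your exponential-asymptotics argument for the ground-state difference is in the right spirit but is first order in $t$ evaluated at $t=1$, and would need substantial work to be made rigorous; the paper sidesteps this entirely by working at one fixed $h$.
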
 Here by ``$\spec(P_{V^+})=\spec(P_{V-})$ up to $\Oh(h^\infty)$'' we mean that for every $E>0$ and $N>0$ there exists a constant $C$ such that
\begin{equation}\label{spectrums}
\sup_{\{\lambda_j^\pm<E\}}|\lambda_j^+-\lambda_j^-| \leq C h^{N}.
\end{equation}
The inverse spectral problem for $1D$ semiclassical Schr\"odinger operators asks whether $\spec(P_{V})$ determines $V$ uniquely up to a translation $x \to x-x_0$ or reflection $x \to -x$. The main tools in studying inverse spectral problems are trace invariants such as heat, wave or Schr\"odinger trace invariants. We recommend the surveys  \cite{zelditchsurvey} by Zelditch, and \cite{dh} by Datchev and the second author for applications of different kinds of trace formulas in inverse spectral results. However the theorem above shows the limitations of semiclassical trace invariants meaning that two Schr\"odinger operators can have the same semiclassical invariants but have different spectra. 

\begin{figure}[htbp]
\includegraphics[width=12cm]{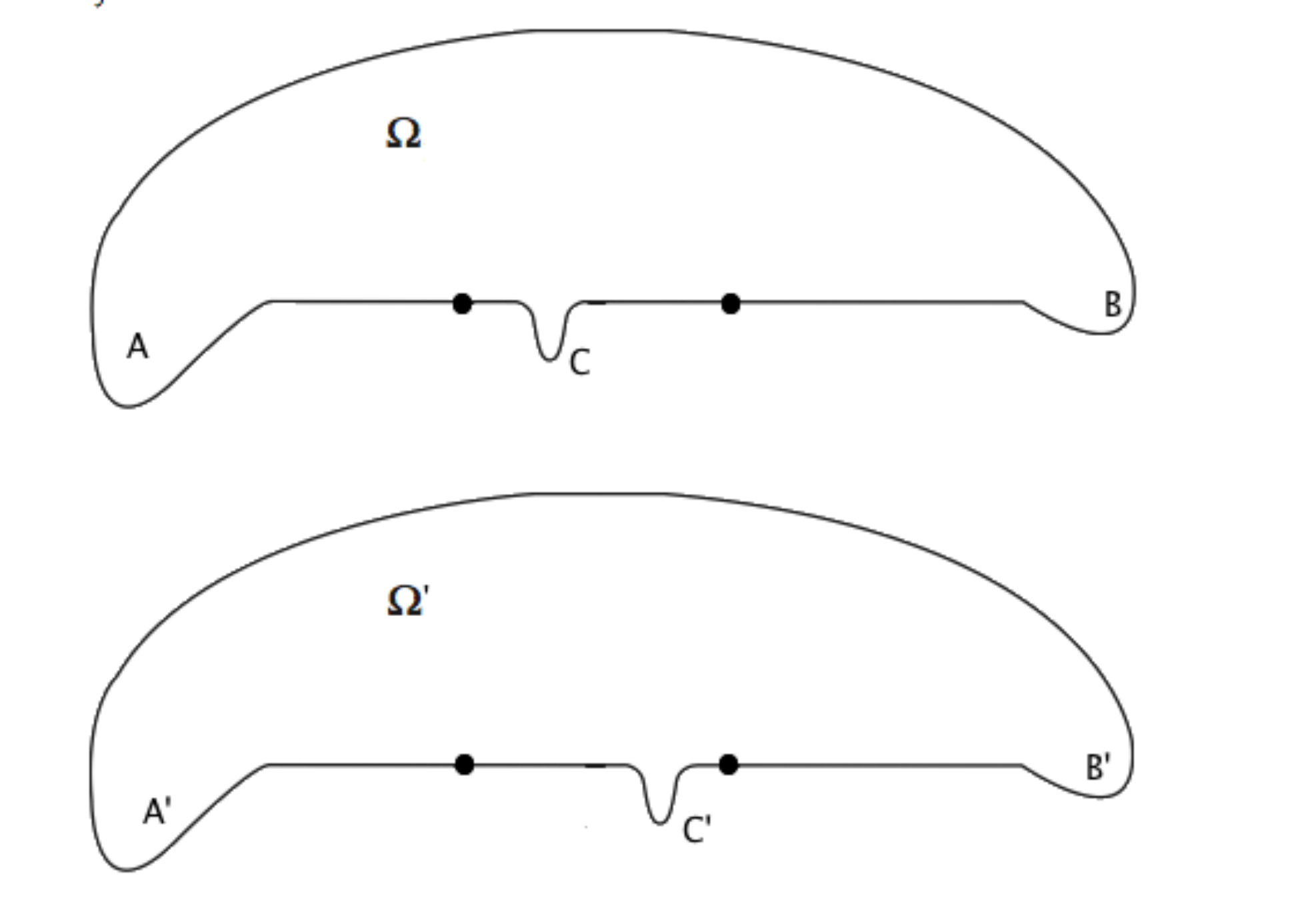}
\caption{ Two domains $\Omega$ and $\Omega'$ with $\Tr(\cos(t\sqrt{\Delta_\Omega})) - \Tr(\cos(t\sqrt{\Delta_{\Omega'}})) \in C^\infty(\R)$, but $\spec(\Delta_\Omega) \neq \spec (\Delta_{\Omega'})$. }
\label{mushrooms}
\end{figure}

An analogous result in the case of bounded plane domains was proved by Fulling and Kuchment in \cite{fulling} where they find two bounded plane domains (called Penrose-Lifshits mushrooms) for which all the wave trace invariants agree but the ground state eigenvalues disagree. A Penrose-Lifshits domain $\Omega$ is a semi-ellipse with asymmetrical bumps  $A$, $B$ and $C$ attached to its boundary as in Figure \ref{mushrooms}. If one detaches the bump $C$, reflects it about the axis of the ellipse and reattaches it, one gets a
non-congruent domain  $\Omega'$ which has the same heat and wave trace invariants as
$\Omega$, i.e. is indistinguishable from $\Omega$ by standard inverse spectral techniques.
In \cite{fulling}, Fulling and Kuchment prove Zelditch's conjecture (see \cite{zelditchsurvey}) that these domains are not isospectral.
  In \cite{c}, Colin de Verdi\`ere constructs an analogue of
the Penrose-Lifshits example for the $1D$ semiclassical Schr\"odinger
operator (see Figure \ref{PenrosePotentials}). Namely he attaches two small bump functions to  the harmonic
oscillator potential $V_0(x)=x^2$. By detaching one of these functions, reflecting
it about the $y$-axis and then reattaching it, he is able to construct two
non-isomorphic potentials that are isospectral modulo $h^\infty$. Our
goal in this paper is to prove a Fulling-Kuchment theorem for this example
and verify Colin de Verdi\`er's conjecture that these ``isospectral modulo $h^\infty$''
potentials are not isospectral.

 \vspace{.25in}

 \begin{figure}[htbp]
\includegraphics[width=12cm]{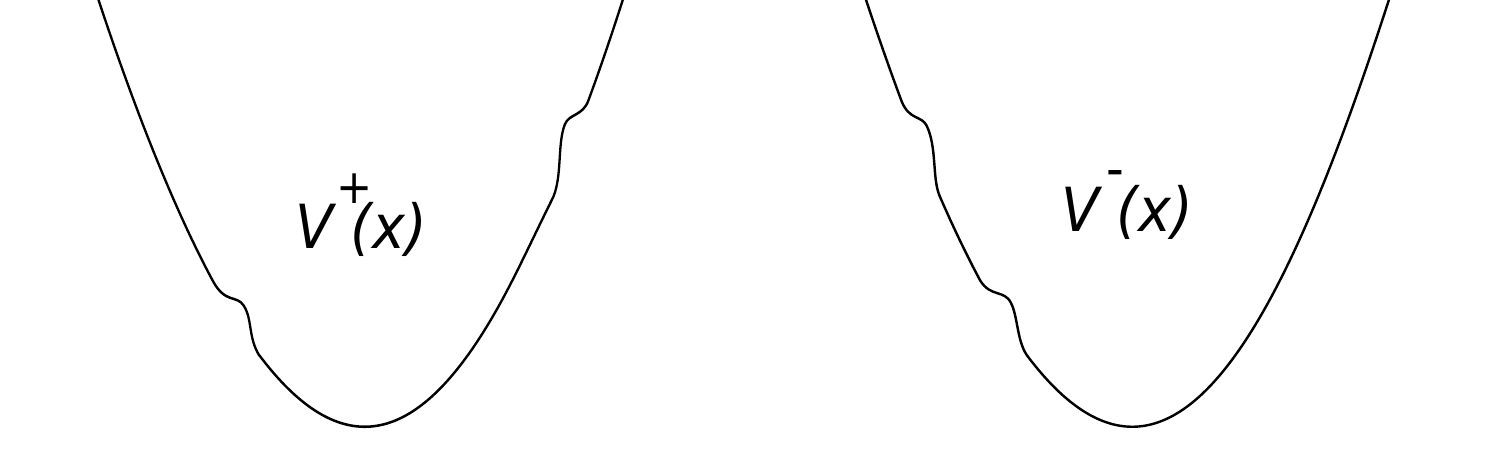}
\caption{ Two potentials with $\spec(P_{V^+}) =\spec(P_{V-})$ up to $\Oh(h^\infty)$ but  $\spec(P_{V^+}) \neq \spec(P_{V-})$} 
\label{PenrosePotentials}
\end{figure}

To our knowledge there are no counterexamples to spectral uniqueness of smooth
semiclassical Schr\"odinger operators. There are in fact many recent positive results
 in this area which use trace invariants but make
  strong assumptions of analyticity or symmetry on the potential. In \cite{gu07}, the first author and Uribe show that any real analytic potential $V$ on $\mathbb R^n$, symmetric with respect to all coordinate axes and with a unique global minimum, is determined within the class of all such potentials by the spectrum at the bottom of the well of its semiclassical Schr\"odinger operator. In \cite{h09}, the second author and in \cite{cg}, Colin de Verdi\`ere and the first author remove the symmetry assumption in dimension $1$, but keep the assumption of analyticity (see also \cite{h09,gu10} for the higher dimensional case). In \cite{c}, Colin de Verdi\`ere removes both the symmetry and analyticity assumptions in the one dimensional case, but adds a genericity assumption and uses all eigenvalues instead of only the low lying ones. In \cite{gw10}, the first author and Wang give a new proof of Colin de Verdi\`ere's result with slightly different generic conditions. In \cite{dhv}, Datchev, Ventura and the second author show that radially symmetric potentials in $\mathbb R^n$ are spectrally determined among all smooth potentials.

\subsection{Remarks} We close the introduction by listing some remarks and related problems:
\begin{itemize}
\item Our method cannot eliminate the possibility of the existence of a 
sequence $h_k \to 0$ where the ground state eigenvalues agree. 
We believe that such a sequence does not exist and we actually expect a much stronger statement to hold:
\[\text{For all} \; j\geq1,\; \text{there exist}\; c_j, C_j>0:\quad|\lambda_j^+-\lambda_j^-| \geq C_j e^{-c_j/h}.\]
\item We can also ask the same question for semiclassical resonances. Can we find two smooth compactly supported potentials $V^+$ and $V^-$ where $P_{V^+}$ and $P_{V^-}$ have different resonances but where the resonances agree up to $\Oh (h^\infty)$? 
\item It would be interesting to study the analogous problem in the case of compact Riemannian manifolds.
 To be more precise, can one find a pair of Riemannian manifolds $(M,g)$ and $(M',g')$
 where $\spec(\Delta_g)\neq \spec(\Delta_{g'})$ but all the wave trace invariants are the same, that is to say $\Tr e^{-it\sqrt{\Delta_g}} -\Tr e^{-it\sqrt{\Delta_{g'}}} \in C^\infty(\mathbb R)$?
\end{itemize}

We are grateful to Daniel Stroock for bringing to our attention a reference for the Kato-Rellich theorem. We are also thankful to Kiril Datchev for his useful comments on the earlier version of the paper. The second author would also like to Ramis Movassagh for helping us to find a graph of the Weber function using \textit{Maple}. 

\section{Proof of Theorem}
 \begin{proof} We will show the existence of the potentials $V^{\pm}$ in Theorem \ref{Theorem} by choosing suitable perturbations of the harmonic oscillator $V_0(x)=x^2$. Suppose $\alpha, \beta \in C^\infty_0(\mathbb R)$ such that supp$(\alpha) \subset (-3, -2)$ and supp$(\beta) \subset (3, 4)$ and they are not identically zero. We put
\begin{equation}\label{Vplusminus}
V^+(x)=x^2+t\alpha(x)+\epsilon\beta(x),
\end{equation}
$$V^-(x)=x^2+t\alpha(x)+\epsilon\beta(-x).$$ We denote by $\lambda^\pm_1(\epsilon)$ the ground state eigenvalue of $P_{V^\pm}$. We will show that there exist $\alpha, \beta$ and $t, \epsilon >0$ small enough such that the pair $V^\pm$ has the properties asserted in the theorem. From the beginning we assume that $\epsilon$ and $t$ are small enough that $V^\pm$ do not have any critical points beside $x=0$.

We first show that ``$\spec(P_{V^+})=\spec(P_{V-})$ up to $\Oh(h^\infty)$'' in the sense of \eqref{spectrums}. This is claimed and proved in \cite{c} using the method of ``Bohr-Sommerfeld quantization conditions to all orders''. It can also be proved using quantum Birkhoff normal forms  and their equivalence with trace invariants as in \cite{gw10}. In that paper the first author and Wang consider the spectral density measure $\nu_h$ defined by
\[ \nu_h(f)=\;\text{Tr}\;f(P_V)=\sum_{j=1}^\infty f(\lambda_j)\] and they show that it satisfies an asymptotic expansion of the form:
\[\nu_h(f)\sim (2\pi h)^{-n} \sum_{k=0}^\infty a_k(f) h^{2k},\]
where $a_k$ are called \textit{semiclassical trace invariants}. They show that these invariants are of the form:
\begin{equation}\label{traceinvariants}a_k(f)=\int \sum_{j=0}^k f^{(2j)}(\xi^2+V(x))P_{k,j}(V', \dots , V^{(2k)})dx d\xi, \end{equation}
where $P_{k,j}$ are universal polynomials. They also give an algorithm to compute $a_k$. Then they show that $\nu_h$ determines the global Birkhoff normal form (whose existence they also prove in this setting) of $P_V$ up to $\Oh(h^\infty)$ and thus determines the eigenvalues up to $\Oh(h^\infty)$. However for our potentials $V^+$ and $V^-$, the trace invariants $a_k$  in \eqref{traceinvariants} are identical and hence their $\nu_h$ are the same up to $\Oh(h^\infty)$ which proves the claim.

To show that the ground state eigenvalues $\lambda_1^+(\epsilon)$ and $\lambda_1^-(\epsilon)$ are different we use Hadamard's variational formula (Lemma \ref{Hadamard} below) which implies that
\begin{equation}\label{HDM}\left.\frac{d}{d\epsilon}\right|_{\epsilon=0} \lambda^\pm_1(\epsilon)=\int_{\mathbb R}\beta(\pm x) |u_1(x)|^2 dx, \end{equation}
 where $u_1(x)$ is an $L^2$ normalized eigenfunction of $-h^2 \frac{d^2}{dx^2} + x^2+t \alpha(x)$ with the 
ground state eigenvalue $\lambda_1=\lambda^+_1(0)= \lambda^-_1(0)$. Then in Lemma \ref{nonevenness} we show that for $h=1$ and $t$ small enough $|u_1(x)|^2$ is not an even function in $(-4, -3) \cup (3,4)$. Thus by (\ref{HDM}) we can find a $\beta$ such that:
$$\left.\frac{d}{d\epsilon}\right|_{\epsilon=0} \lambda^+_1(\epsilon) \neq \left. \frac{d}{d\epsilon}\right|_{\epsilon=0} \lambda^-_1(\epsilon).$$ This implies that for $\epsilon$ small enough (and $t$ small enough)
 we have:
$$\text{For} \; h=1: \quad \lambda^+_1(\epsilon) \neq \lambda^-_1(\epsilon).$$
Finally by the Kato-Rellich theorem (see \cite{RS} Theorem XII.8.) the eigenvalues $\lambda^\pm_1$  are analytic functions of $h$ for all $h\neq0$, therefore  $\lambda^+_1(\epsilon) \neq \lambda^-_1(\epsilon)$ for all $h>0$, except possibly for a sequence $h_k \to 0$. This finishes the proof of the theorem.\end{proof}
It now remains only to prove Lemmas \ref{Hadamard} and \ref{nonevenness} below. Before doing this we makes some remarks regarding the proof above. 

\begin{rem} We note that the Kato-Rellich theorem is very important in our proof. We could not follow our argument with a variable $h$ because $\epsilon$ would depend on $h$. We choose and fix an $\epsilon$ for $h=1$ and then use the analyticity in $h$ to argue that, except for a sequence $h_k \to 0$, for all $h>0$ we have $\lambda^+_1(\epsilon) \neq \lambda^-_1(\epsilon)$. 
\end{rem}
\begin{rem} \label{gs} We also point out that the eigenvalues $\lambda_1^\pm(\epsilon)$ are not analytic at $h=0$. We can see this using the theory of quantum Birkhoff normal forms at the bottom of the well of a potential which was developed by Sj\"ostrand in \cite{Sj}. Since $V^\pm(x)$ and $V_0(x)=x^2$ have the same Taylor coefficients at $x=0$, the bottom of of their wells, they have the same QBNFs at $(x,\xi)=(0,0)$ and therefore the low lying eigenvalues (in particular the ground states) of their Schr\"odinger operators $P_{V^\pm}$ and $P_{V_0}$ must have the same asymptotic expansion of the form $q_1 h+q_2 h^2 +\dots$ as $h \to 0$. However the ground state eigenvalue of $V_0=x^2$ is $h$ hence if $\lambda_1^\pm(\epsilon)$ was analytic at $h=0$ then we would have $\lambda_1^\pm(\epsilon)=h$. But this is not the case if for example we choose $\alpha$ and $\beta$ nonnegative and not identically zero. In fact under this assumption we have $\lambda_1^\pm >h$. To see this we recall that
\begin{equation}\label{groundstate}
\lambda^+_1=\min_{\phi: \, ||\phi||_{L^2}=1} \big((-h^2 \frac{d^2}{dx^2} + x^2+\rho(x) ) \phi, \phi \big) _{L^2}.
\end{equation} where $\rho=t\alpha+\epsilon \beta$ and by assumption $\rho \geq 0$. Let $u_1$ be an $L^2$ normalized ground state eigenfunction of $-h^2 \frac{d^2}{dx^2} + x^2+\rho(x)$. Then in (\ref{groundstate}) the minimum is attained by $u_1$ and
$$ \lambda^+_1=\big((-h^2 \frac{d^2}{dx^2} + x^2)u_1, u_1 \big)_{L^2} + \int \rho |u_1|^2 dx .$$ The first term is greater than or equal to $h$ with equality only if $u_1$ is a ground state eigenfunction of the harmonic oscillator in which case the second term is not zero. So $\lambda^+_1 >h$. 
\end{rem}

We now state Hadamard's variational formula for Schr\"odinger operators. It is a formula for the first variation
 of the eigenvalues of a perturbed operator in terms of the eigenfunctions of the unperturbed operator. Since in this case (unlike for example in the case of bounded domains) the argument is simple we give its proof.
\begin{lem}\label{Hadamard} Let $V \in C^\infty(\R;\R)$ with $\lim_{|x| \to \infty} V(x) = \infty$. Let $\beta \in C^\infty_0(\mathbb R)$. Suppose $\lambda_j(\epsilon)$ is the $j$-th eigenvalue of 
$-h^2 \frac{d^2}{dx^2} + V(x)+\epsilon \beta(x)$ and suppose $u_j(x)$ is an $L^2$ normalized eigenfunction of $-h^2 \frac{d^2}{dx^2} + V(x)$ with eigenvalue $\lambda_j(0)$. Then
$$\left.\frac{d}{d\epsilon}\right|_{\epsilon=0}\lambda_j(\epsilon)=\int_{\mathbb R}\beta(x) |u_j(x)|^2 dx.$$
\end{lem}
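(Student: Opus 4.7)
The plan is to invoke standard first-order perturbation theory. Since $\beta \in C^\infty_0(\R)$ is bounded and symmetric, the Kato--Rellich theorem (already cited in the proof of the theorem) guarantees that, for $\epsilon$ in a neighborhood of $0$, the simple eigenvalue $\lambda_j(\epsilon)$ and a choice of $L^2$-normalized eigenfunction $u_j(\epsilon,x)$ depend analytically on $\epsilon$, with $u_j(0,\cdot)=u_j$. Denote $\dot u_j = \partial_\epsilon u_j(\epsilon,\cdot)|_{\epsilon=0}$ and $\dot\lambda_j = \lambda_j'(0)$.

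First I would differentiate the eigenvalue equation
\[
\bigl(P_V + \epsilon \beta\bigr) u_j(\epsilon,\cdot) = \lambda_j(\epsilon)\, u_j(\epsilon,\cdot)
\]
at $\epsilon=0$, obtaining
\[
\bigl(P_V - \lambda_j(0)\bigr)\dot u_j + \beta\, u_j = \dot\lambda_j\, u_j.
\]
Then I would take the $L^2(\R)$ inner product with $u_j$. Using self-adjointness of $P_V$ on its natural domain in $L^2(\R)$ (into which $\dot u_j$ falls, because $\beta$ is compactly supported smooth and $V$ is confining, so the perturbed eigenfunction lies in the same weighted Sobolev space as the unperturbed one), the first term vanishes:
\[
\la (P_V - \lambda_j(0))\dot u_j,\, u_j\ra = \la \dot u_j,\, (P_V - \lambda_j(0)) u_j\ra = 0.
\]
The normalization $\|u_j\|_{L^2}=1$ then gives $\dot\lambda_j = \int_\R \beta(x)\,|u_j(x)|^2\,dx$, which is the claim.

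The only genuine issue is justifying the differentiation of $\lambda_j(\epsilon)$ and selecting a differentiable eigenfunction family, and this is precisely what Kato--Rellich supplies: the spectrum of $P_V$ is discrete and simple, and $\epsilon\beta$ is a relatively (indeed uniformly) bounded symmetric perturbation, so $(\lambda_j(\epsilon), u_j(\epsilon,\cdot))$ extends analytically. No further technicalities should arise, since cutting off $\beta$ and working on $\R$ rather than on a domain removes the boundary contributions that typically complicate Hadamard-type formulas.
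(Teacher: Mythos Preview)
Your argument is correct and is essentially the same as the paper's: both invoke the simplicity of the eigenvalues to obtain a smooth (analytic) family $u_j(\epsilon,\cdot)$ via Kato--Rellich, differentiate at $\epsilon=0$, and use self-adjointness of $P_V$ to eliminate the term involving $\dot u_j$. The only cosmetic difference is that the paper differentiates the Rayleigh quotient $\lambda_j(\epsilon)=\langle (P_V+\epsilon\beta)u_j(\epsilon),u_j(\epsilon)\rangle$ and then uses $\langle u_j,\dot u_j\rangle=0$ from the normalization, whereas you differentiate the eigenvalue equation first and then pair with $u_j$; these are interchangeable.
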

\begin{proof} Since in dimension one the eigenvalues are simple, for a given $j$ we can choose a smooth one parameter family $u_j(x,\epsilon)$ of $L^2$ normalized real eigenfunctions of $-h^2 \frac{d^2}{dx^2} + V(x)+\epsilon \beta(x)$ with eigenvalues $\lambda_j(\epsilon)$. So by our notation $u_j(x)=u_j(x,0)$. We now write
\[\begin{split}\left.\frac{d}{d\epsilon}\right|_{\epsilon=0}\lambda_j(\epsilon)&= \left.\frac{d}{d\epsilon}\right|_{\epsilon=0}
{\big((-h^2 \frac{d^2}{dx^2} + V(x)+\epsilon \beta(x))u_j(x,\epsilon),u_j(x,\epsilon)\big)_{L^2}}\\ &
=\big(\beta(x)u_j(x), u_j(x)\big)_{L^2}+\big((-h^2 \frac{d^2}{dx^2} + V(x))(\left.\frac{d}{d\epsilon}\right|_{\epsilon=0} u_j(x,\epsilon)),u_j(x)\big)_{L^2}\\ & \qquad \qquad +\big((-h^2 \frac{d^2}{dx^2} + V(x))u_j(x), \left.\frac{d}{d\epsilon}\right|_{\epsilon=0}u_j(x,\epsilon)\big)_{L^2}
\end{split}\]
Because $V$ is real valued the operator $-h^2 \frac{d^2}{dx^2} + V(x)$ is symmetric and therefore the last two terms are identical. In fact each one is zero. This follows from $ \big(u_j(x), \left.\frac{d}{d\epsilon}\right|_{\epsilon=0}u_j(x,\epsilon)\big)_{L^2}=0$ which in turn follows by applying $\left.\frac{d}{d\epsilon}\right|_{\epsilon=0}$ to the equation $ \big(u_j(x,\epsilon), u_j(x,\epsilon) \big)_{L^2}=1$. 
\end{proof}

In the next lemma we put $h=1$.
\begin{lem}\label{nonevenness} Let $\alpha \in C^\infty_0(-3,-2)$ be nonnegative and not identically zero and let $V(x)=x^2+t\alpha(x)$. Suppose $u_1(x)$ is a ground state eigenfunction of $- \frac{d^2}{dx^2} + V(x)$ with the ground state eigenvalue $\lambda_1$. Then there exists $t>0$ small enough such that $|u_1(x)|^2$ is not an even function on $(-4, -3) \cup (3,4)$.
\end{lem}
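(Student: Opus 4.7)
The plan is to use first-order perturbation theory in $t$ around $t=0$, where $u_1 = u_0(x) := \pi^{-1/4} e^{-x^2/2}$ is even and $\lambda_1 = 1$. By the Kato-Rellich theorem applied to the analytic family $H + t\alpha$ with $H := -\frac{d^2}{dx^2} + x^2$, the simple normalized ground state $u_1(\cdot, t)$ depends analytically on $t$, and elliptic regularity promotes this to $C^\infty$-convergence on compact sets; hence $u_1(x,t) = u_0(x) + t\, v(x) + O(t^2)$ uniformly for $x$ in compact subsets of $\mathbb R$. Differentiating the eigenvalue equation and the normalization $\|u_1(\cdot,t)\|_{L^2}=1$ at $t=0$ gives
\[(H-1)\, v = (c - \alpha)\, u_0, \qquad (v, u_0)_{L^2} = 0,\]
where $c = \int \alpha\, u_0^2\, dx$ is Hadamard's first variation (Lemma~\ref{Hadamard}). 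Decomposing $v = v_e + v_o$ into even and odd parts, parity considerations isolate
\[(H-1)\, v_o = f, \qquad f(x) := -\tfrac{1}{2}\bigl(\alpha(x) - \alpha(-x)\bigr)\, u_0(x),\]
which is odd, not identically zero, and supported in $[-3,-2]\cup[2,3]$. Since the odd spectrum of $H$ is $\{3,7,11,\dots\}$, $H-1$ is invertible on odd $L^2$ functions, so $v_o$ is uniquely determined.

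On $(3,\infty)$ the right-hand side vanishes and $v_o$ solves the homogeneous Weber equation; since only multiples of $u_0$ are $L^2$ at $+\infty$, we have $v_o(x) = a\, u_0(x)$ on $(3,\infty)$ for some $a \in \mathbb R$. The crux is to show $a > 0$. Let $\phi_o$ be the odd solution of $(H-1)\phi = 0$ normalized by $\phi_o'(0) = 1$. Then $W(u_0, \phi_o) = u_0 \phi_o' - u_0' \phi_o$ is constant and equal to $\pi^{-1/4}$; any positive zero $x_0$ of $\phi_o$ would force $\phi_o'(x_0) = \pi^{-1/4}/u_0(x_0) > 0$, contradicting the fact that $\phi_o$ must decrease into such a zero from positive values. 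Hence $\phi_o > 0$ on $(0,\infty)$ and $\phi_o < 0$ on $(-\infty,0)$. Pairing $(H-1)v_o = f$ with $\phi_o$ on $[-R,R]$ for any $R > 3$ and integrating by parts, the interior term vanishes because $(H-1)\phi_o = 0$, and the boundary terms (using $v_o = a u_0$ on $(3,\infty)$, $v_o = -a u_0$ on $(-\infty,-3)$ by oddness, and the constant Wronskian) simplify to
\[\int_{\mathbb R} f\, \phi_o\, dx = \bigl[v_o \phi_o' - v_o' \phi_o\bigr]_{-R}^{R} = 2 a\, \pi^{-1/4}.\]
Splitting $f$ and substituting $y = -x$ in the contribution from $[2,3]$ yields $\int_{\mathbb R} f\, \phi_o\, dx = -\int_{-3}^{-2}\alpha\, u_0\, \phi_o\, dx > 0$, since $\alpha \geq 0$ is nontrivial, $u_0 > 0$, and $\phi_o < 0$ on $(-3,-2)$. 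Hence $a > 0$.

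Therefore $v_o(x) = a\, u_0(x) > 0$ on $(3,4)$, and for sufficiently small $t > 0$,
\[|u_1(x,t)|^2 - |u_1(-x,t)|^2 = 4 t\, u_0(x)\, v_o(x) + O(t^2) = 4 t a\, u_0(x)^2 + O(t^2)\]
is nonzero for every $x \in (3,4)$, proving that $|u_1|^2$ is not even on $(-4,-3)\cup(3,4)$. The main obstacle is pinning down the sign $a > 0$; this rests on the Wronskian identity preventing $\phi_o$ from having any zeros on $(0,\infty)$, which then drives the sign computation in the final integral.
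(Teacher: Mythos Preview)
Your argument is correct and takes a genuinely different route from the paper. The paper works at a fixed small $t$: it introduces the Weber function $W$ solving $-W''+(x^2-\lambda_1)W=0$ with the same decay as $u_1$ at $-\infty$, observes that $u_1=W$ on $(-\infty,-3]$ and $u_1=cW(-\cdot)$ on $[-2,\infty)$, and then uses a Pr\"ufer substitution together with Sturm--Liouville comparison (between $Q=\lambda_1-x^2$ and $Q_1=\lambda_1-x^2-t\alpha$) to prove $c>1$; this in turn requires a separate section analyzing the shape of $W$ for $\lambda_1$ slightly larger than $1$. You instead linearize in $t$ at $t=0$, isolate the odd part $v_o$ of the first-order correction, and determine its sign on $(3,\infty)$ by pairing the equation $(H-1)v_o=f$ against the odd homogeneous solution $\phi_o$, reducing everything to the constancy of the Wronskian $W(u_0,\phi_o)=\pi^{-1/4}$ and the sign of $\phi_o$ on each half-line. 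Your approach is more self-contained and avoids the Weber-function analysis entirely; the paper's approach, on the other hand, yields a statement about the actual eigenfunction at finite $t$ (the inequality $c>1$) rather than only a first-order statement, and its comparison machinery would adapt more readily to perturbations that are not infinitesimally small.
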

\begin{proof}
First of all since away from supp$(\alpha)$ the potential $V(x)$ is real analytic, if $|u_1(x)|^2$ is even on $(-4, -3)\cup(3,4)$ then by analytic continuation it is even on $(-\infty, -3)\cup(3, \infty)$. We also note that $(u_1(x))^2$ being even implies $u_1(x)$ is even. This is because a ground state eigenfunction never vanishes. Therefore $u_1(x)$ can not change sign and in particular can not be odd anywhere.

To prove $u_1(x)$ is not even on $(-\infty, -3)\cup(3, \infty)$ we introduce the parabolic cylinder functions below and review some of their basic properties in \S\ref{Weberfunction}.

Let $W(x)$ be the unique solution to 
\begin{equation}\label{Weber}-W''(x)+(x^2-\lambda_1)W(x)=0, \end{equation} with \[\lim_{x \to - \infty} W(x)=0 \; \quad \text{and}\; \quad W(-3)=u_1(-3).\]
The function $W(x)$ is called a Weber function. An argument using \eqref{groundstate}, similar to that in Remark \ref{gs}, shows that for $t$ small enough we have $1<\lambda_1<3$. In fact we can make $\lambda_1$ arbitrary close to $1$. By the WKB method we see that $|W(x)|$ either grows or decays exponentially as $x \to \infty$, but exponential decay is ruled out by the fact that it would make $W$ an eigenfunction of the harmonic oscillator with eigenvalue $\lambda_1 \not\in\{1,3,5,\dots\}$. Thus  $W(x)$ is exponentially decaying near $-\infty$ with $\lim_{x \to -\infty} W=0$ and exponentially growing (in absolute value) near $\infty$. In fact we will see in \S\ref{Weberfunction} that $\lim_{x \to \infty} W=-\infty$, and that $W$ has a unique critical point (which is a global maximum) at $x=-a$, $|a|<\sqrt{\lambda_1}$, and vanishes only once for a large positive value of $x$.  In particular, $W'(x)$ is positive for $x<-a$ and negative for $x>-a$. See Figure \ref{Weberfigure}  for a graph of $W$. We will prove these claims about $W$ in \S\ref{Weberfunction}.

Since  $u_1$ satisfies the same equation as $W$ on $(-\infty, -3)\cup (-2, \infty)$,  using $u(-3) = W(-3)$ we have
\[u_1(x)=W(x), \quad x \leq -3,\]
and using $\lim_{x \to \infty}u_1(x) = \lim_{x \to \infty} W(-x) = 0$ we have
\[u_1(x)=cW(-x), \quad x \geq -2\] 
for a positive constant $c$. In particular $u_1(0)=cW(0)$. We claim that $c>1$ which will show that $u_1$ is not even on $(-\infty, -3)\cup (3, \infty)$, completing the proof of the lemma.

The proof that $c>1$ uses Sturm-Liouville theory. Let $Q(x)=\lambda_1-x^2$ and $Q_1(x)=\lambda_1-x^2-t\alpha(x)$. Then
\begin{equation}\label{Q} W''+QW=0,
\end{equation}
and
\begin{equation}\label{Q_1}u_1''+Q_1u_1=0.
\end{equation}
We rewrite \eqref{Q} and \eqref{Q_1} as a first order system using the Pr\"ufer substitution:
\begin{equation}\label{polar}
\left\{\begin{array}{ll}
W(x)=r(x)\sin \theta(x) \\
W'(x)=r(x)\cos \theta(x)
\end{array} \right., \qquad 
\left\{\begin{array}{ll}
u_1(x)=r_1(x)\sin \theta_1(x) \\
u_1'(x)=r_1(x)\cos \theta_1(x)
\end{array} \right. ,
\end{equation}
where we choose the branches of $\theta$ and $\theta_1$ such that $\theta(-3)=\theta_1(-3) \in (0,\pi/2)$ (this is possible because $W(-3)>0$ and $W'(-3)>0$, see \S\ref{Weberfunction}).
Then (see \cite{BR} section 5, chapter 10):
\begin{equation}\label{dtheta}
\left\{\begin{array}{ll}
\theta'(x)=Q(x) \sin^2 \theta(x) + \cos^2 \theta(x),
\\ \theta_1'(x)=Q_1(x) \sin^2 \theta_1(x) + \cos^2 \theta_1(x), \end{array} \right.
\end{equation}
and by \eqref{polar}
\begin{equation}\label{cot}
\left\{\begin{array}{ll}
W'(x)=\cot \theta(x) W(x),
\\ u_1'(x)=\cot \theta_1(x) u_1(x).\end{array} \right.
\end{equation}
Because $Q_1(x)\leq Q(x)$ and because $\theta_1(-3)=\theta(-3)$, by applying an elementary comparison theorem  to \eqref{dtheta} (see \cite[Theorem 7, Chapter 1]{BR}) we get
\begin{equation}\label{comparison1}
\theta_1(x) \leq \theta(x) \quad \text{for} \quad x\geq -3.
\end{equation}
Moreover since $W(x)$ and $W'(x)$ are positive on $x<-a$ (see \S\ref{Weberfunction}) we have $0<\theta(x)<\frac{\pi}{2}$ for $x<-a$ and thus
$\theta_1(x)< \frac{\pi}{2}$ for $-3 \leq x<-a$. In fact, because $u_1$ never vanishes, $\theta_1(x)$ is never zero and
\[ 0<\theta_1(x) \leq \frac{\pi}{2} \quad \text{for} \quad -3 \leq x \leq -a.\]
Using this and \eqref{comparison1}, by applying an elementary comparison theorem to \eqref{cot} and because $u_1(-3)=W(-3)$ we obtain 
\begin{equation} \label{comparison2}
u_1(x) \geq W(x) \quad \text{for} \quad -3\leq x \leq -a. 
\end{equation} 
This is the key point: \eqref{comparison2} is an inequality for $u_1$ and $W$ in the region where they solve different equations.
On the other hand $u_1(x)=cW(-x)$ for $x \geq -2$ so
\begin{equation} \label{u1'}u_1'(x)=-c W'(-x) \quad \text{for} \quad x \geq -2, \end{equation} and in particular
$u_1'(-a)=-c W'(a)$.  If $\theta_1(-a)=\frac{\pi}{2}$ then $u_1'(-a)=0$ by \eqref{polar} and therefore $W'(a)=0$. But $W$ has only one critical point which is at $x=-a$.  Hence $\theta_1(-a) < \frac{\pi}{2}$. Then by \eqref{polar} $u'(-a) >0$ and thus $W'(a)<0$. This shows that $a$ must be positive because if $a<0$ then the point $a$ is on the left of the critical point $-a$ of $W$ but $W$ is increasing for $x<-a$. Finally $W$ is decreasing on $(-a,0)$ and by \eqref{u1'} $u_1$ is increasing on $(-a,0)$. So by \eqref{comparison2} $u_1(0) >W(0)$ and therefore $c>1$.
\end{proof}

\section{Parabolic cylinder functions (Weber functions) with small frequency}\label{Weberfunction} Let $W(x)$ be a Weber function with small frequency which was defined in \eqref{Weber}. Here we prove the properties of $W$ which are needed in the proof of Lemma \ref{nonevenness}.

That $W(x)$ decays exponentially near $-\infty$ and grows exponentially near $\infty$ follows from the classical WKB method for ODEs (see for example \cite{Olver}). In fact the exact decay and growth rates (see \cite{WW}, section 16.5.) are given by
\[ W(x) \sim C (\sqrt{2}|x|)^{(\lambda_1-1)/2} e^{-x^2/2}  \quad \text{as} \quad x \to -\infty, \]
\[ W(x) \sim \frac{C\sqrt{2\pi}}{\Gamma(\frac{1}{2}(1-\lambda_1))}(\sqrt{2}x)^{-(\lambda_1+1)/2} e^{x^2/2}
 \quad \text{as} \quad x \to \infty,\] where $C$ is a positive constant which depends on our normalization $W(-3)=u_1(-3)$. Note that for $\lambda_1>1$ very close to $1$ we have $ \Gamma(\frac{1}{2}(1-\lambda_1))<0$ which shows that $\lim_{x \to \infty}W(x)=-\infty$. 
 
 \begin{figure}[htbp]
\includegraphics[width=12cm]{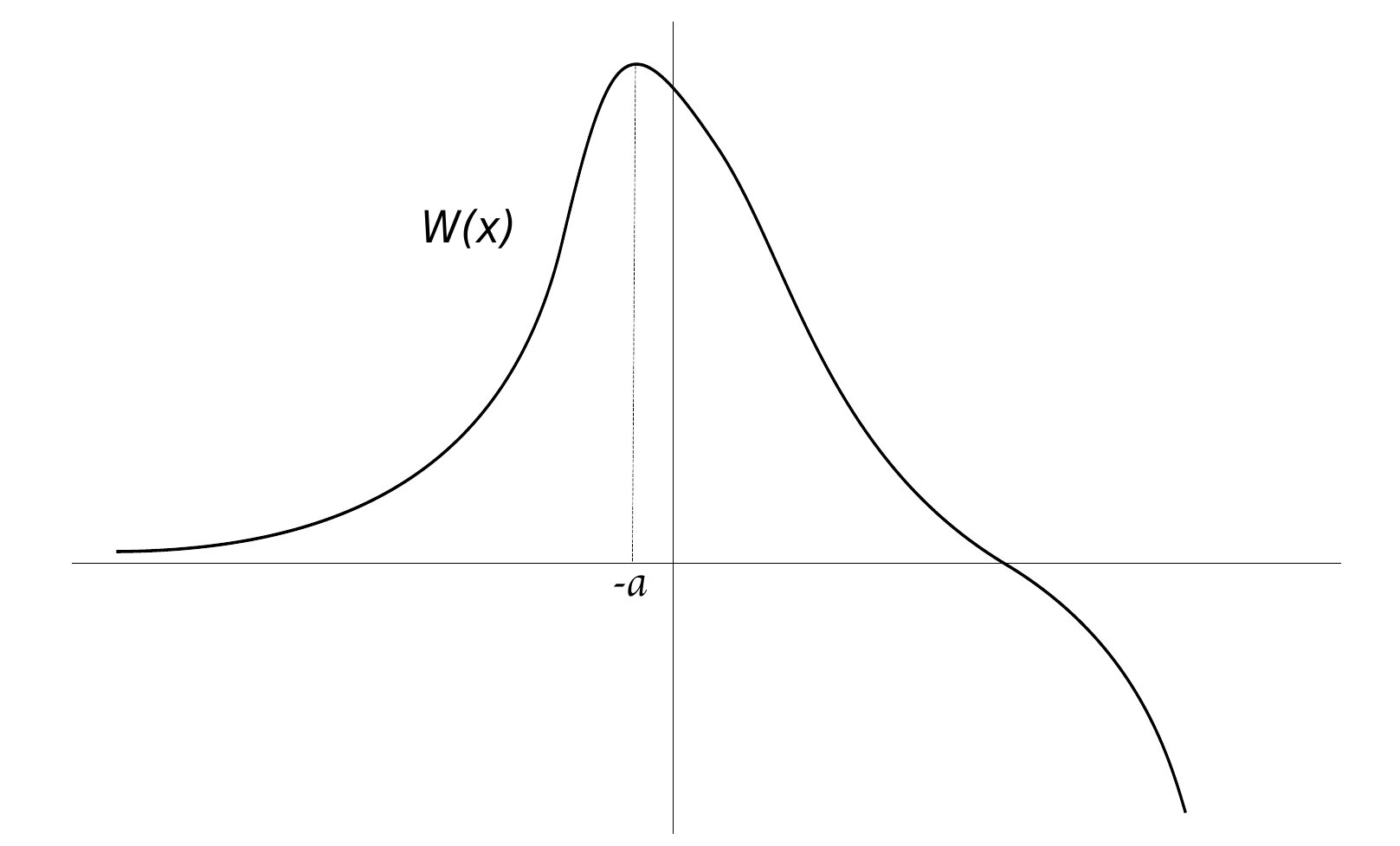}
\caption{ Weber function with small frequency.  }\label{Weberfigure}
\end{figure}
 
To prove the other properties of $W$  we first show that $W$ is positive on $(-\infty, 3]$.  To do this we choose $\eta \in C_0^\infty(3,4)$ nonnegative and consider the operator  
\[
-\frac{d^2}{dx^2}+x^2+\delta \eta(x).
\]
We represent its ground state eigenvalue by $\mu_1(\delta)$ and an associated smooth family of $L^2$ normalized eigenfunctions by $\psi_1(x,\delta)$. Then by Hadamard's variational formula
\[ \frac d {d \delta} \mu_1(\delta) =\int \eta(x) |\psi_1(x,\delta)|^2 dx.\]  Similarly
\[ \frac d {d t} \lambda_1(t) =\int \alpha(x) |u_1(x,t)|^2 dx.\] This implies that $\mu_1(\delta)$ and $\lambda_1(t)$ are increasing functions in $\delta$ and $t$ respectively. Thus because $\mu_1(0)=\lambda_1(0)=1$, we can find $\delta$ and $t$ positive so that $1<\mu_1(\delta)=\lambda_1(t)<2$. This is the value $t$ that we choose in Lemma \ref{nonevenness}. Because $\psi_1$ satisfies the same equation as $W$ for $x\leq 3$ we have $\psi_1=kW$ on $(-\infty,3]$. On the other hand since $\psi_1$ is  a ground state eigenfunction it does not vanish on $\mathbb R $ so the Weber function $W$ has no zeros in $(-\infty,3]$ and must be positive there.

We can use this fact to show that there is only one critical point which is at the maximum $x=-a$. First of all in the forbidden region $x <-\sqrt{\lambda}$  we have $W(x)>0$ and therefore by \eqref{Weber} we have $W''>0$ there. This implies that $W'$ will increase on $(-\infty, -\sqrt{\lambda_1})$ and in particular will never vanish.  The maximum $x=-a$ must be in the classical region $|x|\leq \sqrt{\lambda_1}$ where $W''$ and $W$ have different signs. There can not be two critical points in the classical region because then by Rolle's theorem $W''$ would vanish in between them, and $W$ would vanish there as well by the equation, contradicting $W>0$.  Furthermore, $W$ can not have any critical points in the forbidden region $x > \sqrt{\lambda_1}$. Assume $x_0$ is such a critical point. Then $W''(x_0)\neq0$ because if it were zero then by \eqref{Weber} $W(x_0)=0$ which together with would $W'(x_0)=0$ imply that $W$ is a trivial solution. If $W''(x_0)>0$ then by \eqref{Weber} $W(x_0)>0$ and the graph of $W$ should stay concave up for all $x>x_0$ which implies that $\lim_{x\to \infty }W(x)=\infty$ which is a contradiction. Finally if $W''(x_0)<0$ then $W(x_0) <0$ but because $W(3)>0$ we should have a zero of $W$ between $3$ and $x_0$ which shows that $x_0>3$. Now if $z_0$ is the smallest zero with $z_0>3$ then $W'(z_0)<0$. After this point $z_0$, $W'$ can only decrease (and is never zero) because $W''<0$ in this region. This contradicts $W'(x_0)=0$.

This concludes the proof of the properties of $W$ which are needed for Lemma \ref{nonevenness}. In the course of the proof of this lemma we also prove that $a>0$.

\end{document}